\documentclass[a4paper,12pt]{article}
\usepackage{times, url}
\usepackage{graphicx,amsmath,amssymb,amsthm,amsfonts,bm,float,cite,soul,color}
\newtheorem{theorem}{Theorem}[section]

\newtheorem{conjecture}{Conjecture}[section]
\newtheorem{definition}{Definition}[section]
\newtheorem{proposition}{Proposition}[section]
\newtheorem{remark}{Remark}[section]

\usepackage[none]{hyphenat}

\usepackage[center]{caption}
\newcommand{\intH}[2]{\mathcal{H}_{#1}\left(#2\right)}

\newcommand{\mygcd}[2]{\mathrm{gcd}\left( #1,#2 \right)}
\newcommand{\Franel}[1]{\mathcal{F}\left( #1 \right)}
\newcommand{\IP}[1]{\textbf{E}\left( #1 \right)}
\newcommand{\FP}[1]{\left\{ #1 \right\}}
\newcommand{\card}[1]{\left| #1 \right|}
\newcommand{\rad}[1]{\textbf{rad}\left( #1 \right)}
\newcommand{\setTot}[1]{t\left( #1 \right)}
\newcommand{\pef}[2]{\left\lfloor \frac{#1}{#2} \right\rfloor}

\newcommand{\doubleparii}[1]{\left(\!\!\left(#1\right)\!\!\right)}
\newcommand{\doublepari}[1]{\left(\!\left(#1\right)\!\right)}

\usepackage[bottom,symbol]{footmisc}
\DeclareFontEncoding{TS1}{}{}
\DeclareFontSubstitution{TS1}{cmr}{m}{n}
\DeclareTextSymbol{\tcrp}{TS1}{'251}
\DeclareTextSymbolDefault{\tcrp}{TS1}

\usepackage[hidelinks]{hyperref} 
\usepackage{orcidlink}

\hypersetup{pdftitle={Your paper title here in sentence case}, pdfauthor={First Author, Second Author}}

\begin{document}
\setcounter{page}{1}

\thispagestyle{empty} 

\noindent \textbf{Notes on Number Theory and Discrete
Mathematics \newline
Print ISSN 1310--5132, Online ISSN 2367--8275 \newline
XXXX, Volume XX, Number X, XXX--XXX \newline
DOI: 10.7546/nntdm.XXXX}  
\vspace{11mm}

\begin{center}
{\LARGE \bf On the calculation of an integral \\[4mm] introduced by Hubert Delange} 
\vspace{6mm}

{\Large \bf Steven Brown$^1$}
\vspace{4mm}

$^1$ 48 rue Pottier, 78150 Le Chesnay Rocquencourt, France \\
e-mail: \url{steven.brown.math@gmail.com}
\end{center}

\vspace{4mm}  

\noindent
\noindent {\bf Received:}  DD Month XXXX \hfill {\bf Revised:} DD Month XXXX \\
{\bf Accepted:} DD Month XXXX  \hfill {\bf Online First:} DD Month XXXX \\[4mm] 
{\bf Abstract:} 
In his paper, ``Sur la distribution des fractions irréductibles de dénominateur $n$ ou de dénominateur au plus égal à $x$'', Hubert Delange considers the family integrals defined by

\begin{equation*}
\int_0^1\left(\frac{N(n,t)}{\phi(n)}-t\right)^{2q}dt,
\end{equation*}
where $n>1$ and $q>0$ are positive integers, $\phi$ is the Euler totient function, and for any real number $t>0$, $N(n,t)$ counts the number reduced fractions whith denominator $n$ that belong to the interval $]0,t]$. He was able to evaluate the integral for $q=1$ but stated that ``\emph{Malheureusement, nous n'avons pas su jusqu'à présent faire cette évaluation pour $q>1$}''. In addition to its intrinsic interest, this calculation would help understand the asymptotic distribution of the $\phi(n)$ totatives of $n$ relative to $\phi(n)$ equidistributed numbers in $[0,n]$.

In this paper, we first provide some general properties of the integral for any $q>1$ and reduce the problem to its calculation for odd square-free integers with at least two distinct prime factors. We then analyse into more detail the case $q=2$ for which we provide a formula that reduces the problem to the evaluation of the fourfold Möbius sum, of integer valued modular sums related to generalized Dedekind sums.\\

{\bf Keywords:} Franel integrals, Euler's totient function, modular sums, generalized Dedekind sums, reduced residue systems, Möbius sums. \\ 
{\bf 2020 Mathematics Subject Classification:} 11A25; 11F20, 11A07. 
\vspace{5mm}

\section{Introduction}

Delange considered the following family of integrals\footnote{Although the family depends on two positive integer $n> 1$ and $q>0$ we tend to see the problem as finding a formula valid for all $n$, for any fixed value of $q$.},
\[ \int_0^1\left(\frac{N(n,t)}{\phi(n)}-t\right)^{2q}dt, \]
with a view to study the asymptotic distribution of the differences
\[ a_j - \left(j-\frac{1}{2}\right)\frac{n}{\phi(n)}, \]
for $j$ ranging from 1 to $n$, where $a_j$, is the j\textsuperscript{th} totative\footnote{Or totitive as introduced by James Joseph Sylvester in 1879, see Chapter V in \cite{dickson1919history}. See also Chapter 3 in \cite{sandor2004handbook} for more recent results.} in ascending order. We denote  by $\setTot{n}$ the set of the $\phi(n)$ totatives of $n$. To date, it seems that the only known formula is
\begin{equation}\label{eq:Delange_integral_q_egal_1}
\int_0^1\left(\frac{N(n,t)}{\phi(n)}-t\right)^{2}dt = \frac{2^{\omega(n)}}{12n\phi(n)},
\end{equation}
when $q=1$, with $\omega(n)$ being the number of distinct prime divisors of $n$. It was first proved by Hubert Delange in \cite{delange1968distribution}, using multiplicative functions. The formula was proved later by Lucien Van Hamme in \cite{van1971generalisation}, who calculated this integral using Parseval's Theorem. Let us introduce the functions
\begin{equation}\label{eq:theta_n}
\theta_n(x):=\sum_{d\mid n}\mu(d)\IP{ \frac{x}{d} },
\end{equation}
and
\begin{equation}\label{eq:nu_n}
\nu_n(x):=\sum_{d\mid n}\mu(d)\FP{ \frac{x}{d} }.
\end{equation}
Given that $\theta_n(x)$ counts the number of totatives less than $x$ (see for example \cite{apostol1976introduction}, chapter 2, exercise 9 page 47), we have $N(n,t)=\theta_n(nt)$. Knowing that\footnote{Because we have $\mu(d)\frac{x}{d}=\mu(d)\IP{\frac{x}{d}} + \mu(d)\FP{\frac{x}{d}}$ and $\frac{\phi(n)}{n}=\sum_{d\mid n} \frac{\mu(d)}{d}$ from \cite{apostol1976introduction} page 26.}, for any real number $x$,
\begin{equation}\label{eq:formula_thetanx_plus_nunx}
\theta_n(x)+\nu_n(x) = x\frac{\phi(n)}{n},
\end{equation}  
we see that
\[
\int_0^1\left(\frac{N(n,t)}{\phi(n)}-t\right)^{2q} dt = \frac{\intH{2q}{n}}{\phi(n)^{2q}},
\]
where $\intH{k}{n}$ is defined by
\begin{equation}
\intH{k}{n}:=\int_0^1\left(-\nu_n(nt)\right)^kdt.
\label{eq:Hkn}
\end{equation}

This paper aims at studying the integral $\intH{k}{n}$, that is, to try to find closed form formulas of $\intH{k}{n}$, valid for any positive integer $n>1$, when the integer parameter $q>0$ is fixed.

\section{Some first considerations on \texorpdfstring{$\intH{k}{n}$}{H(k,n)}}

The integral $\intH{k}{n}$ is well defined since the integrand is a polynom of degree $k$ with only a finite number of $\phi(n)$ singularities (when $t=ad/n$ for some integer $d\mid n$ and for an integer $0<a<n/d$). We define the Franel integral (see \cite{mcintosh1996franel}) of order $k$ as
\begin{equation}\label{eq:def:Franel_k}
\Franel{a_1,\ldots ,a_k}:=\int_0^1 \doublepari{a_1 t}\cdots \doublepari{a_k t}dt,
\end{equation}
with $\doublepari{}$ defined by
\begin{equation}\label{eq:def_double_par}
 \doublepari{x}=
\begin{cases}
    \FP{x}-\frac{1}{2}, & \text{if $x\notin \mathbb{Z}$}\\
    0, & \text{otherwise}.
\end{cases}
\end{equation}
Note that we have\footnote{see ''An equivalent lattice point sum'' page 193 in \cite{mcintosh1996franel}}:
\begin{equation}\label{eq:Franel_sum_lattice_points}
\Franel{a_1,\ldots ,a_k} = \frac{1}{(2\pi)^k}\sum_{\substack{b_1,\ldots ,b_k\in \mathbf{Z}\setminus \left\{0\right\} \\ a_1b_1 + \cdots a_kb_k = 0}} \frac{1}{b_1\cdots b_k}.
\end{equation}

\begin{proposition}\label{prop:Hkn_is_Mobius_sum_of_Franel_integrals} $\intH{k}{n}$ is a k-fold Möbius sum of Franel integrals of order $k$.
\begin{equation}\label{eq:expr1_Hkn}
\intH{k}{n}:=(-1)^k\sum_{d_1\mid n}\mu(d_1)\cdots\sum_{d_k\mid n}\mu(d_k) \Franel{\frac{n}{d_1},\ldots,\frac{n}{d_k}},
\end{equation}
\end{proposition}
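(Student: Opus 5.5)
The plan is to rewrite $\nu_n(nt)$ in terms of the sawtooth function $\doublepari{\cdot}$ and then expand the $k$-th power by multilinearity. For each divisor $d\mid n$ we have $\FP{\frac{nt}{d}} = \doublepari{\frac{n}{d}t} + \frac{1}{2}$ whenever $\frac{n}{d}t\notin\mathbb{Z}$. Substituting this into the definition of $\nu_n$ gives
\[
\nu_n(nt) = \sum_{d\mid n}\mu(d)\doublepari{\frac{n}{d}t} + \frac{1}{2}\sum_{d\mid n}\mu(d).
\]
This identity holds for every $t\in[0,1]$ outside the finite set of points $t=ad/n$, which is exactly where some $\frac{n}{d}t$ is an integer.

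Next, since $n>1$, the classical identity $\sum_{d\mid n}\mu(d)=0$ makes the constant term vanish. Hence, for all but finitely many $t\in[0,1]$,
\[
-\nu_n(nt) = -\sum_{d\mid n}\mu(d)\doublepari{\frac{n}{d}t}.
\]
Both sides are bounded step-type functions, so changing the integrand on a finite set does not affect the integral. We may therefore replace $(-\nu_n(nt))^k$ by the $k$-th power of the right-hand side inside $\intH{k}{n}$.

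Finally, we expand the $k$-th power as a product of $k$ copies of the sum, each indexed by its own divisor $d_i$:
\[
\Bigl(-\sum_{d\mid n}\mu(d)\doublepari{\tfrac{n}{d}t}\Bigr)^k = (-1)^k\sum_{d_1\mid n}\cdots\sum_{d_k\mid n}\mu(d_1)\cdots\mu(d_k)\doublepari{\tfrac{n}{d_1}t}\cdots\doublepari{\tfrac{n}{d_k}t}.
\]
The sums are finite, so we can interchange summation and integration. Each resulting integral is, by definition, $\Franel{\frac{n}{d_1},\ldots,\frac{n}{d_k}}$, which is the claimed formula.

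No step is a real obstacle. The only point needing care is the bookkeeping at the exceptional points where $\doublepari{\cdot}$ differs from $\FP{\cdot}-\frac12$. This is disposed of by observing that they are finitely many, hence of measure zero, and that every function involved is bounded.
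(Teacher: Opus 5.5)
Your proof is correct and follows essentially the same route as the paper: write $\FP{\cdot}$ as the sawtooth plus $\frac{1}{2}$, remove the constant using $\sum_{d\mid n}\mu(d)=0$, expand the $k$-th power, and ignore the finitely many exceptional points. The only difference is cosmetic: the paper expands with $B(x)=\FP{x}-\frac{1}{2}$ and converts to $\doublepari{\cdot}$ inside each integral, whereas you convert before expanding. Your explicit use of $n>1$ is needed, since the identity fails for $n=1$.
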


\begin{proof}
With $B(x)={x}-1/2$ the first periodic Bernoulli polynomial, we have $\nu_n(x)=\sum_{d\mid n}\mu(d)B(x)$ since $\sum_{d\mid n}\mu(d)=0$ (see \cite{hardy1979introduction}, Theorem 263). Therefore,
\[
\intH{k}{n}:=(-1)^k\sum_{d_1\mid n}\mu(d_1)\cdots\sum_{d_k\mid n}\mu(d_k) \int_0^1\prod_{i=1}^kB\left(\frac{n}{d_i} t\right)dt ,
\]
but
\begin{equation}\label{eq:Franel_with_B_or_dp}
\int_0^1\prod_{i=1}^kB\left(\frac{n}{d_i} t\right)dt=\int_0^1\prod_{i=1}^k\doubleparii{\frac{n}{d_i} t}dt,
\end{equation}
since the integrands are equal appart from a finite number of points.
\end{proof}

\begin{proposition}\label{prop:Hkn0_kodd} If $k$ is odd, $\intH{k}{n}=0$ for any positive integer $n$.
\end{proposition}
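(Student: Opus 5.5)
The plan is to use the symmetry $t\mapsto 1-t$, which flips the sign of the sawtooth function. By Proposition \ref{prop:Hkn_is_Mobius_sum_of_Franel_integrals}, $\intH{k}{n}$ is a finite linear combination of Franel integrals $\Franel{a_1,\ldots,a_k}$ with positive integer arguments $a_i=n/d_i$. It therefore suffices to show that every Franel integral of odd order $k$ with integer arguments vanishes.

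To see this, I will use that $\doublepari{x}$ is odd and $1$-periodic: $\doublepari{-x}=-\doublepari{x}$ for all real $x$. Both sides are $0$ when $x\in\mathbb{Z}$, and otherwise $\FP{-x}=1-\FP{x}$. Since $a_i\in\mathbb{Z}$, we get $\doublepari{a_i(1-t)}=\doublepari{-a_it}=-\doublepari{a_it}$.

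Now substitute $t\mapsto 1-t$ in the definition (\ref{eq:def:Franel_k}). This gives
\[
\Franel{a_1,\ldots,a_k}=\int_0^1\prod_{i=1}^k\doublepari{a_i(1-t)}\,dt=(-1)^k\Franel{a_1,\ldots,a_k}.
\]
So for odd $k$ the integral equals its own negative and is $0$. Summing over the divisors $d_1,\ldots,d_k$ with the Möbius weights then yields $\intH{k}{n}=0$.

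Alternatively, one could apply the same substitution directly to (\ref{eq:Hkn}), using that $\nu_n(n(1-t))=-\nu_n(nt)$ away from finitely many points, because each $n/d$ is an integer. Another route is the lattice sum (\ref{eq:Franel_sum_lattice_points}), where $(b_i)\mapsto(-b_i)$ is an involution that changes the sign of each term when $k$ is odd. That route needs care with conditional convergence, so I prefer the integral substitution.

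There is no real obstacle here. The only point needing attention is that the integer-point conventions for $\doublepari{\cdot}$ keep the identity exact, and in any case those points form a set of measure zero.
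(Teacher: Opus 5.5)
Your argument is correct for $n\ge 2$ (see the caveat below), but it reaches the key fact by a different route from the paper.

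\textbf{Comparison with the paper.} Both proofs use Proposition \ref{prop:Hkn_is_Mobius_sum_of_Franel_integrals} to reduce the claim to the vanishing of Franel integrals of odd order. The paper then uses McIntosh's lattice-point formula (\ref{eq:Franel_sum_lattice_points}). It pairs each solution $(b_1,\ldots,b_k)$ of $a_1b_1+\cdots+a_kb_k=0$ with $(-b_1,\ldots,-b_k)$, and these contributions cancel for odd $k$. You instead reflect $t\mapsto 1-t$ in the integral itself. This uses only that $\doublepari{\cdot}$ is odd and $1$-periodic and that each $a_i$ is an integer.

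The two arguments express the same symmetry: negating all frequencies corresponds to reflecting $t$. Yours is self-contained and elementary. The paper's depends on an external representation whose derivation involves products of Fourier series that do not converge absolutely.

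Your stated reason for avoiding the lattice route is slightly off, though. The lattice sum converges absolutely. The coordinate of largest modulus is determined by the others, and $\sum_{c\in(\mathbb{Z}\setminus\{0\})^{k-1}}(\max_j|c_j|)^{-1}\prod_j|c_j|^{-1}<\infty$. So the paper's pairing is legitimate once the representation is granted; the real cost of that route is the representation, not the pairing.

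\textbf{The restriction $n\ge 2$.} You should state this explicitly, because the statement is false without it. Proposition \ref{prop:Hkn_is_Mobius_sum_of_Franel_integrals} replaces $\FP{x/d}$ by $B(x/d)$ using $\sum_{d\mid n}\mu(d)=0$. Your alternative identity $\nu_n(n(1-t))=-\nu_n(nt)$ needs the same fact, not merely that each $n/d$ is an integer. For $nt\notin\mathbb{Z}$ one actually has $\nu_n(n(1-t))=\sum_{d\mid n}\mu(d)-\nu_n(nt)$. For $n=1$ we get $\nu_1(t)=\FP{t}$ and $\intH{k}{1}=(-1)^k/(k+1)\neq 0$. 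So both your proof and the paper's are valid under the introduction's standing assumption $n>1$, and the proposition should be read with that restriction.
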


\begin{proof} A direct consequence of Franel integrals being equal to zero with an odd number of parameters in Equation (\ref{eq:expr1_Hkn}). In Equation (\ref{eq:Franel_sum_lattice_points}) when $a_1b_1+\cdots + a_kb_k=0$, we also have $a_1(-b_1)+\cdots + a_k(-b_k)=0$, and the two points lead to the contribution $\frac{1}{b_1\cdots b_k}+(-1)^k\frac{1}{b_1\cdots b_k}$ which is zero for odd $k$. Therefore all points contributions can be grouped by pairs that contribute to 0 in the sum.
\end{proof}

\begin{proposition}\label{prop:Hkn_egal_Hkradn} For any positive integers $k$ and $n$
\begin{equation}
\intH{k}{n}=\intH{k}{\rad{n}}.
\label{eq:Hkn_egal_Hkradn}
\end{equation}
\end{proposition}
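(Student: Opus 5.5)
Write $n=rm$ with $r=\rad{n}$ and $m=n/r$. The plan is to work directly from the definition (\ref{eq:Hkn}): show that $\nu_n(nt)$ is a rescaled, periodically repeated copy of $\nu_r(rt)$, and then use a change of variables.

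First, I will reduce the sum over divisors. Since $\mu(d)=0$ unless $d$ is squarefree, and the squarefree divisors of $n$ are exactly the divisors of $r$, we have for every real $x$
\[
\nu_n(x)=\sum_{d\mid r}\mu(d)\FP{\frac{x}{d}}=\nu_r(x).
\]
Hence $\nu_n(nt)=\nu_r(rmt)$, and
\[
\intH{k}{n}=\int_0^1\left(-\nu_r(r\,mt)\right)^k dt .
\]

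Second, I will establish periodicity. Each term $\FP{x/d}$ with $d\mid r$ is periodic in $x$ with period $d$, and $d$ divides $r$. Therefore $x\mapsto\nu_r(x)$ is periodic with period $r$. Equivalently, $g(s):=\left(-\nu_r(rs)\right)^k$ is periodic in $s$ with period $1$.

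Third, I will change variables. Substituting $s=mt$ gives
\[
\int_0^1 g(mt)\,dt=\frac{1}{m}\int_0^m g(s)\,ds=\frac{1}{m}\cdot m\int_0^1 g(s)\,ds=\intH{k}{r}.
\]
This uses the $1$-periodicity of $g$ to split $[0,m]$ into $m$ unit intervals, each contributing the same integral. The integrals exist as Riemann integrals, since $g$ is piecewise polynomial with finitely many discontinuities, as the paper notes.

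No step is a real obstacle; the only points needing care are the period claim and the vanishing of the non-squarefree terms. As an alternative, one could use Proposition~\ref{prop:Hkn_is_Mobius_sum_of_Franel_integrals} together with the homogeneity $\Franel{ma_1,\ldots,ma_k}=\Franel{a_1,\ldots,a_k}$, which is immediate from the same periodicity argument or from (\ref{eq:Franel_sum_lattice_points}). In that version one writes $n/d_i=m\,(r/d_i)$ for squarefree $d_i$, and the result follows again.
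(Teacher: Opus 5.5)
Your proof is correct, but it takes a different route from the paper's. The paper stays inside its Franel-integral framework. It starts from the M\"obius expansion (\ref{eq:expr1_Hkn}), restricts each $d_i$ to divisors of $\rad{n}$, and writes $n/d_i=(\rad{n}/d_i)\cdot(n/\rad{n})$. It then invokes the homogeneity $\Franel{ca_1,\ldots,ca_k}=\Franel{a_1,\ldots,a_k}$, justified through the lattice-point representation (\ref{eq:Franel_sum_lattice_points}): for $c\neq 0$ the constraint $\sum_i ca_ib_i=0$ selects the same $b$'s as $\sum_i a_ib_i=0$.

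You instead do the squarefree reduction before any expansion, which gives the pointwise identity $\nu_n=\nu_{\rad{n}}$. You then apply, once and to the whole integrand, the elementary fact that $\int_0^1 g(mt)\,dt=\int_0^1 g(t)\,dt$ for an integrable $1$-periodic $g$. The paper's homogeneity of Franel integrals is this same fact applied termwise to $g(t)=\prod_i\doublepari{a_it}$, which is essentially your stated alternative.

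Your version is shorter and self-contained. It needs neither Proposition~\ref{prop:Hkn_is_Mobius_sum_of_Franel_integrals} nor the Fourier-analytic lattice-sum formula quoted from McIntosh. Incidentally, that formula as printed lacks a factor $(-1)^{k/2}$: it gives $-1/12$ instead of $1/12$ for $\Franel{1,1}$. This is harmless for the paper's argument, since the same constant appears on both sides. The paper's route, in exchange, records homogeneity as a reusable property of Franel integrals and keeps everything within the M\"obius--Franel framework used throughout the paper.
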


\begin{proof} The Mobius function is zero on integers with a square factor, hence
\[
\intH{k}{n}:=(-1)^k\sum_{d_1\mid \rad{n}}\mu(d_1)\cdots\sum_{d_k\mid \rad{n}}\mu(d_k) \Franel{\frac{\rad{n}}{d_1}\frac{n}{\rad{n}},\ldots,\frac{\rad{n}}{d_k}\frac{n}{\rad{n}}},
\]
but $\Franel{\frac{\rad{n}}{d_1}\frac{n}{\rad{n}},\ldots,\frac{\rad{n}}{d_k}\frac{n}{\rad{n}}}=\Franel{\frac{\rad{n}}{d_1},\ldots,\frac{\rad{n}}{d_k}}$ as a general property of Franel integrals which is easily seen from Equation (\ref{eq:Franel_sum_lattice_points}): when $a_1b_1+\cdots + a_kb_k=0$ we also have $ca_1b_1+\cdots + ca_kb_k=0$ for any integer $c$.
\end{proof}

\begin{proposition} For any odd integer $m$,
\begin{equation}
\intH{k}{2m}=\intH{k}{m}.
\label{eq:Hk2m_egal_Hkm}
\end{equation}
\end{proposition}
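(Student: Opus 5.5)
The plan is to compare the integrands of $\intH{k}{2m}$ and $\intH{k}{m}$ directly. I will show that they agree up to a translation of $t$ by $\frac12$, outside a finite set of points. As in the proof of Proposition~\ref{prop:Hkn_is_Mobius_sum_of_Franel_integrals}, write $B(x)=\FP{x}-\frac12$. Since $\sum_{d\mid n}\mu(d)=0$ for $n>1$, we have $\nu_n(x)=\sum_{d\mid n}\mu(d)B(x/d)$. The case $m=1$ can be checked by hand, or it is covered by the same computation since $\mu(1)\ne 0$ and the sum over $d\mid m$ then has the single term $d=1$.

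First, because $m$ is odd, every divisor of $2m$ is uniquely of the form $d$ or $2d$ with $d\mid m$, and $\mu(2d)=-\mu(d)$. Hence
\[
\nu_{2m}(2mt)=\sum_{d\mid m}\mu(d)\left(B\left(\frac{2mt}{d}\right)-B\left(\frac{mt}{d}\right)\right).
\]
Next I will use the duplication (Raabe) relation $B(2y)=B(y)+B\left(y+\frac12\right)$. It holds for every real $y$ outside the discrete set $\frac12\mathbb{Z}$, as one checks on $[0,\frac12)$ and $[\frac12,1)$ separately. This gives, for all but finitely many $t\in[0,1]$,
\[
\nu_{2m}(2mt)=\sum_{d\mid m}\mu(d)\,B\left(\frac{m}{d}t+\frac12\right).
\]

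The key step uses that $m/d$ is odd. We have $\frac{m}{d}t+\frac12=\frac{m}{d}\left(t+\frac12\right)-\frac{m/d-1}{2}$, and $\frac{m/d-1}{2}$ is an integer. Since $B$ has period $1$, this yields
\[
\nu_{2m}(2mt)=\sum_{d\mid m}\mu(d)B\left(\frac{m}{d}\left(t+\tfrac12\right)\right)=\nu_m\left(m\left(t+\tfrac12\right)\right)
\]
almost everywhere.

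Finally, the function $t\mapsto\nu_m(mt)$ has period $1$, because each $B\left(\frac{m}{d}t\right)$ does. So the integral of its $k$-th power over $[0,1]$ is unchanged by the shift $t\mapsto t+\frac12$. Raising the identity above to the $k$-th power, multiplying by $(-1)^k$ and integrating gives $\intH{k}{2m}=\intH{k}{m}$.

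The only delicate point is the exceptional set where the duplication identity or the definition of $B$ misbehaves. This set is finite in $[0,1]$, so it does not affect the integrals.
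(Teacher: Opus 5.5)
For odd $m>1$ your argument is the paper's own proof, and every step is correct. You split the divisors of $2m$ as $d$ and $2d$ with $\mu(2d)=-\mu(d)$, and apply the duplication relation $B(2y)-B(y)=B\left(y+\frac12\right)$ (Delange's Lemma 1 with $n=2$). You then absorb the $\frac12$ into the argument because $m/d$ is odd, and finish with the $1$-periodicity of $t\mapsto\nu_m(mt)$. Your caution about $\frac12\mathbb{Z}$ is not even needed: with $B(x)=\FP{x}-\frac12$ the duplication relation holds for every real $y$.

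The part that is wrong is your aside on $m=1$. You justify $\nu_n(x)=\sum_{d\mid n}\mu(d)B(x/d)$ by $\sum_{d\mid n}\mu(d)=0$, which needs $n>1$. You use this identity twice: for $n=2m$, and in the final identification, for $n=m$. When $m=1$ the second use fails, because $\nu_1(x)=\FP{x}=B(x)+\frac12$. Your computation then gives $\nu_2(2t)=\nu_1\left(t+\frac12\right)-\frac12$, not $\nu_1\left(t+\frac12\right)$.

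The proposition is in fact false for $m=1$. A direct computation gives $\intH{2}{2}=\frac1{12}$, consistent with $\intH{2}{p}=\frac{p-1}{6p}$, whereas $\intH{2}{1}=\int_0^1\FP{t}^2\,dt=\frac13$. More generally, for even $k$ one gets $\intH{k}{2}=\frac{1}{2^k(k+1)}\neq\frac{1}{k+1}=\intH{k}{1}$. So checking $m=1$ by hand would refute your claim rather than confirm it. You should state and prove the result for odd $m>1$. That is the range the paper needs and implicitly assumes: the integrals are introduced for $n>1$, and the paper's proof uses the same $B$-representation of $\nu_m$.
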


\begin{proof} From Equation (\ref{eq:Hkn}), $\intH{k}{2m}=\int_0^1\left(-\nu_{2m}(2mt)\right)^kdt$. Since $\mu(2d)=-\mu(d)$, we see that
\[ \nu_{2m}(2mt) = \sum_{d\mid m}\mu(d)\left(B\left(2\frac{m}{d}t\right)-B\left(\frac{m}{d}t\right)\right).\]
We use lemma 1, page 78, in \cite{delange1968distribution}, with $n=2$, to see that $B(2x)-B(x)=B\left(x+\frac{1}{2}\right)$. Also, because $\frac{m}{d}$ is odd, whe have $B\left(\frac{m}{d}t+\frac{1}{2}\right)=B\left(\frac{m}{d}\left(t+\frac{1}{2}\right)\right)$. Thus $\nu_{2m}(2mt)=\nu_{m}\left(m\left(t+\frac{1}{2}\right)\right)$. The change of variable $u=t+\frac{1}{2}$, leads to $\intH{k}{2m}=\int_{\frac{1}{2}}^{\frac{3}{2}}\left(-\nu_{m}(mu)\right)^kdu$, which ends the proof, since $u\mapsto \nu_{m}(mu)$ has period 1.
\end{proof}

\subsection{An expression of \texorpdfstring{$\intH{k}{n}$}{H(k,n)} as a function of \texorpdfstring{$\setTot{n}$}{t(n)}}

We define 

\begin{equation}
V_k(n):=\sum_{a \in \setTot{n}} \nu_n(a)^k.
\label{eq:Vkn}
\end{equation}

\subsubsection{Preliminary results}

\begin{proposition}\label{prop:nuna_plus_nunma} Let $n$ be an integer greater than or equal to 2 and $a\in \setTot{n}$,
\begin{equation}\label{eq:nuna_plus_nunma}
\nu_n(a)+\nu_n(-a)=-1.
\end{equation} 
\end{proposition}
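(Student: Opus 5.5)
We start from the definition $\nu_n(x)=\sum_{d\mid n}\mu(d)\FP{x/d}$ and use the elementary identity $\FP{y}+\FP{-y}=1$ when $y\notin\mathbb{Z}$, together with $\FP{y}+\FP{-y}=0$ when $y\in\mathbb{Z}$. The idea is to add the two sums term by term:
\[
\nu_n(a)+\nu_n(-a)=\sum_{d\mid n}\mu(d)\left(\FP{\frac{a}{d}}+\FP{\frac{-a}{d}}\right),
\]
and then to decide, for each divisor $d$ of $n$, whether $a/d$ is an integer.

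Since $a\in\setTot{n}$, we have $\mygcd{a}{n}=1$. So for a divisor $d\mid n$, the condition $d\mid a$ forces $d\mid\mygcd{a}{n}=1$, that is $d=1$. This gives the following split:
\begin{itemize}
\item For $d=1$, $a/d=a$ is an integer, so the bracket equals $0$.
\item For every $d\mid n$ with $d>1$, $a/d\notin\mathbb{Z}$, so the bracket equals $1$.
\end{itemize}

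Hence
\[
\nu_n(a)+\nu_n(-a)=\sum_{\substack{d\mid n\\ d>1}}\mu(d)=\sum_{d\mid n}\mu(d)-\mu(1)=0-1=-1,
\]
using $\sum_{d\mid n}\mu(d)=0$ for $n\ge 2$ (Theorem 263 in \cite{hardy1979introduction}, already invoked in the proof of Proposition \ref{prop:Hkn_is_Mobius_sum_of_Franel_integrals}). This is exactly where the hypothesis $n\ge2$ enters.

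There is no real obstacle. The only point needing care is the treatment of the integer case $d=1$: there the fractional-part identity gives $0$ rather than $1$, and this single term produces the $-1$. Alternatively one can argue via equation (\ref{eq:formula_thetanx_plus_nunx}) and the count $\theta_n$ of totatives, but the direct computation above is shorter, and it is the one I would write.
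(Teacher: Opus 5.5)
Your proof is correct and follows the paper's argument: split the divisor sum into $d=1$ (where the bracket is $0$) and $d>1$ (where it is $1$), then use $\sum_{d\mid n}\mu(d)=0$ for $n\ge 2$. You also spell out, via $\gcd(a,n)=1$, why $a/d\notin\mathbb{Z}$ for every divisor $d>1$, a step the paper leaves implicit.
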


\begin{proof} let $d\mid n$. If $d=1$ then $\FP{\frac{a}{d}}+\FP{\frac{-a}{d}}=0$. If $d>1$, $\frac{a}{d}$ is not an integer and therefore $\FP{\frac{a}{d}}+\FP{\frac{-a}{d}}=1$. It follows that
\begin{equation*}
\nu_n(a)+\nu_n(-a) = \sum_{\substack{d\mid n\\ d>1}}\mu(d) = \left(\sum_{d\mid n}\mu(d)\right) - \mu(1) = -1.
\end{equation*}
\end{proof}

\begin{proposition}\label{prop:nunknpa} Let $n\geq 2$ be an integer, $k$ an integer and $a\in \setTot{n}$, 
\begin{equation}\label{eq:nunknpa}
\nu_n(kn+a)=\nu_n(a).
\end{equation} 
\end{proposition}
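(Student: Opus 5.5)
The plan is to work term by term in the definition (\ref{eq:nu_n}) of $\nu_n$, and to use periodicity of the fractional part for each divisor. First, fix a divisor $d\mid n$ and write $n=dm$ with $m$ a positive integer. Then
\[
\frac{kn+a}{d}=km+\frac{a}{d},
\]
and $km$ is an integer. Since $\FP{x+j}=\FP{x}$ for any real $x$ and any integer $j$, this gives
\[
\FP{\frac{kn+a}{d}}=\FP{\frac{a}{d}}.
\]
This holds for every integer $k$, positive, negative or zero, because only the integrality of $km$ is used.

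Second, multiply this equality by $\mu(d)$ and sum over all $d\mid n$. By (\ref{eq:nu_n}) the left-hand side becomes $\nu_n(kn+a)$ and the right-hand side becomes $\nu_n(a)$, which is (\ref{eq:nunknpa}).

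There is no real obstacle here. The hypothesis $a\in\setTot{n}$ is not even needed, since the identity holds for every real $a$; it only matters that $k$ is an integer and $d$ divides $n$. The only point to check is that $kn/d$ is an integer for every divisor $d$, which is immediate. The statement is presumably phrased for totatives because of how it will be used, namely to reduce $\nu_n(-a)$ to $\nu_n(n-a)$ so that Proposition \ref{prop:nuna_plus_nunma} can be applied inside $\setTot{n}$.
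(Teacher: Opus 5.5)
Your proof is correct and is essentially the paper's own argument: $kn/d$ is an integer for every $d\mid n$, so $\FP{\frac{kn+a}{d}}=\FP{\frac{a}{d}}$ term by term, and summing against $\mu(d)$ gives the identity. Your remark that the hypothesis $a\in\setTot{n}$ is not needed is also right.
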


\begin{proof} It comes from a direct calculation and the fact that $k\frac{n}{d}$ is an integer when $d\mid n$:
\begin{equation*}
\nu_n(kn+a) =\sum_{d\mid n}\mu(d)\FP{k\frac{n}{d}+\frac{a}{d}} = \nu_n(a).
\end{equation*}
\end{proof}

\begin{theorem} \label{thm:Hkn_fn_Vkp1n} Let $k$ and $n$ be positive integers,
\begin{equation}
\intH{k}{n} = - \frac{1+(-1)^k}{(k+1)\phi(n)} V_{k+1}(n).
\label{eq:Hkn_fn_Vkp1n}
\end{equation}
\end{theorem}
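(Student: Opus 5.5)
The plan is to compute $\intH{k}{n}$ directly, by cutting the period into the intervals between consecutive totatives. On each such interval $\nu_n$ is affine, so each piece of the integral is explicit.

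\textbf{Step 1: reduce to a period.} Substituting $x=nt$ in (\ref{eq:Hkn}) gives
\[
\intH{k}{n}=\frac{1}{n}\int_0^n\left(-\nu_n(x)\right)^k dx .
\]
The same computation as in Proposition~\ref{prop:nunknpa} shows $\nu_n(x+n)=\nu_n(x)$ for every real $x$. So the integral may be taken over any interval of length $n$, for instance $[a_1,a_1+n]$, where $a_1<\cdots<a_{\phi(n)}$ are the totatives. Set $a_{\phi(n)+1}=a_1+n$.

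\textbf{Step 2: $\nu_n$ on each interval.} By (\ref{eq:formula_thetanx_plus_nunx}), $\nu_n(x)=x\phi(n)/n-\theta_n(x)$. The step function $\theta_n$ is right-continuous and jumps by exactly $1$ at each totative, taken modulo $n$; it is constant on $[a_j,a_{j+1})$. Hence on that interval $\nu_n$ is affine with slope $\phi(n)/n$. Integrating $(-\nu_n)^k$ there gives
\[
-\frac{n}{(k+1)\phi(n)}\Big[\left(-\nu_n(a_{j+1}^-)\right)^{k+1}-\left(-\nu_n(a_j)\right)^{k+1}\Big].
\]

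\textbf{Step 3: evaluate the left limit.} Since $\theta_n$ jumps by $1$ at a totative $b$, we have $\nu_n(b^-)=\nu_n(b)+1$. By Proposition~\ref{prop:nuna_plus_nunma} and Proposition~\ref{prop:nunknpa}, this equals $-\nu_n(-b)=-\nu_n(n-b)$. Therefore $-\nu_n(b^-)=\nu_n(n-b)$.

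\textbf{Step 4: sum over the period.} Summing over $j$, and using that $b\mapsto n-b$ (taken modulo $n$) permutes $\setTot{n}$, we get
\[
\int_{a_1}^{a_1+n}(-\nu_n)^k=-\frac{n}{(k+1)\phi(n)}\Big[\sum_{b\in\setTot{n}}\nu_n(n-b)^{k+1}-(-1)^{k+1}\sum_{b\in\setTot{n}}\nu_n(b)^{k+1}\Big]
=-\frac{n\,(1+(-1)^k)}{(k+1)\phi(n)}V_{k+1}(n).
\]
Dividing by $n$ gives (\ref{eq:Hkn_fn_Vkp1n}). For odd $k$ this is consistent with Proposition~\ref{prop:Hkn0_kodd}.

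\textbf{Main obstacle.} The delicate part is the endpoint bookkeeping.
\begin{itemize}
\item Right-continuity must be used correctly, and the jump of $\theta_n$ at a totative must be exactly $1$. The jump of $\theta_n$ at integer $x$ is $\sum_{d\mid \gcd(x,n)}\mu(d)$, which is $1$ when $\gcd(x,n)=1$ and $0$ otherwise. So there are no jumps at non-totatives.
\item The wrap-around at $a_1+n$ is handled by the periodicity in Step 1.
\item The case $n=1$, where Proposition~\ref{prop:nuna_plus_nunma} does not apply, should be checked by hand with the convention $\setTot{1}=\{1\}$. There $\nu_1(x)=\FP{x}$, so $\intH{k}{1}=(-1)^k/(k+1)$, while $V_{k+1}(1)=\nu_1(1)^{k+1}=0$. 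The formula therefore fails for $n=1$, so the argument should be stated for $n\geq 2$, which is the paper's standing assumption $n>1$.
\end{itemize}
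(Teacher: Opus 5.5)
Your proof is correct and is essentially the paper's argument. Both cut a period at consecutive totatives, integrate the affine pieces, and then use $\nu_n(a)+\nu_n(-a)=-1$, $\nu_n(n-a)=\nu_n(a)$ and the bijection $a\mapsto n-a$ of $\setTot{n}$. Your left-limit identity $-\nu_n(b^-)=\nu_n(n-b)$ just repackages the paper's index shift in its sum $A$, which uses the convention $u_0=-1$.

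Your caveat is also right: $\intH{k}{1}=(-1)^k/(k+1)$ while $V_{k+1}(1)=0$, so the statement genuinely needs $n\geq 2$. Both your argument and the paper's rely on this, through Proposition~\ref{prop:nuna_plus_nunma} and $\sum_{d\mid n}\mu(d)=0$.
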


\begin{proof}
The function $t\mapsto \left(\theta_n(nt)-\phi(n)t\right)^k$ is periodic of period 1, therefore for any real number $x$,
\begin{equation*}
\intH{k}{n}=\int_{x}^{x+1}\left(\theta_n(nt)-\phi(n)t\right)^kdt.
\end{equation*}
In particular, for $x=\frac{-1}{n}$, with the convention that $u_0=-1$,
\begin{align*}
\intH{k}{n}
&=\sum_{i=0}^{\phi(n)-1}\int_{\frac{u_i}{n}}^{\frac{u_{i+1}}{n}}\left(i-\phi(n)t\right)^kdt \\
&=\frac{-1}{(k+1)\phi(n)}\left(\sum_{i=0}^{\phi(n)-1} \left(i-\frac{\phi(n)}{n}u_{i+1}\right)^{k+1} - \sum_{i=0}^{\phi(n)-1} \left(i-\frac{\phi(n)}{n}u_{i}\right)^{k+1} \right).
\end{align*}
We shall look separately at the two sums, say respectively, $A$ and $B$. Regarding the first sum
\begin{align*}
A
&= \sum_{i=1}^{\phi(n)} \left(i-\frac{\phi(n)}{n}u_i-1\right)^{k+1}         &\text{change of index}\\
&= \sum_{a\in \setTot{n}} \left(\theta_n(a)-\frac{\phi(n)}{n}a-1\right)^{k+1} &\text{summing over }\setTot{n}\\
&= \sum_{a\in \setTot{n}} \left(-\nu_n(a)-1\right)^{k+1}                      &\text{from Equation (\ref{eq:formula_thetanx_plus_nunx})}\\
&= \sum_{a\in \setTot{n}} \nu_n(-a)^{k+1}                                     &\text{from Equation (\ref{eq:nuna_plus_nunma})}\\
&= \sum_{a\in \setTot{n}} \nu_n(n-a)^{k+1}                                    &\text{from Equation (\ref{eq:nunknpa})}\\
&= \sum_{a\in \setTot{n}} \nu_n(a)^{k+1}.                                     &\text{reverse order sum}
\end{align*}
Regarding the second sum, let's first remark that we have
\begin{equation*}
i+\phi(n)-\frac{\phi(n)}{n}u_{i+\phi(n)} = i-\frac{\phi(n)}{n}u_i,
\end{equation*}
hence the sum is constant provided it spans over $\phi(n)$ consecutive indexes.
\begin{align*}
B
&= \sum_{i=1}^{\phi(n)} \left(i-u_{i}\frac{\phi(n)}{n}\right)^{k+1}       &\text{from the remark above}\\
&= \sum_{a\in \setTot{n}} \left(\theta_n(a)-a\frac{\phi(n)}{n}\right)^{k+1} &\text{summing over }\setTot{n}\\
&= \sum_{a\in \setTot{n}} \left(-\nu_n(a)\right)^{k+1}                      &\text{from Equation (\ref{eq:formula_thetanx_plus_nunx})}\\
&= (-1)^{k+1} \sum_{a\in \setTot{n}} \nu_n(a)^{k+1}.                             &
\end{align*}
\end{proof}

\begin{remark} The formula implies that $\intH{k}{n}$ vanishes when $k$ is odd, consistently with Proposition \ref{prop:Hkn0_kodd}.
\end{remark}

\subsection{Calcul de \texorpdfstring{$\intH{k}{p}$}{H(k,p)}, for \texorpdfstring{$p$}{p} prime}

\begin{theorem} Let $p$ be a prime number and $k$ and $\alpha$ two positive integers, we have the following identity:
\begin{equation}
\intH{k}{p^\alpha} = \frac{1+(-1)^k}{(k+1)(k+2)(p-1)}\sum_{j=0}^{k+1}\binom{k+2}{j}B_j p^{1-j}.
\label{eq:Hkp_alpha}
\end{equation}
\end{theorem}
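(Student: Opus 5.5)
The plan is to reduce to the case $\alpha=1$ and then compute $V_{k+1}(p)$ explicitly, so that Theorem \ref{thm:Hkn_fn_Vkp1n} gives the formula. By Proposition \ref{prop:Hkn_egal_Hkradn}, $\intH{k}{p^\alpha}=\intH{k}{p}$, since $\rad{p^\alpha}=p$. If $k$ is odd, both sides of (\ref{eq:Hkp_alpha}) vanish: the left side by Proposition \ref{prop:Hkn0_kodd}, the right side because of the factor $1+(-1)^k$. So we may take $k$ even and $n=p$, with $\phi(p)=p-1$.

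The key step is to evaluate $\nu_p$ on the totatives. The totatives of $p$ can be taken to be $a=1,\ldots,p-1$, since the set $\setTot{p}$ enters $V_{k+1}(p)$ only through $\nu_p(a)$, which is $p$-periodic by Proposition \ref{prop:nunknpa}. For such $a$,
\[
\nu_p(a)=\mu(1)\FP{a}+\mu(p)\FP{\frac{a}{p}}=0-\frac{a}{p}=-\frac{a}{p}.
\]
Therefore
\[
V_{k+1}(p)=(-1)^{k+1}p^{-(k+1)}\sum_{a=1}^{p-1}a^{k+1}.
\]

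Next I apply Faulhaber's formula with the convention $B_1=-\tfrac12$, which the statement implicitly uses:
\[
\sum_{a=0}^{p-1}a^{m}=\frac{1}{m+1}\sum_{j=0}^{m}\binom{m+1}{j}B_j\,p^{m+1-j}.
\]
With $m=k+1\geq 1$ the term $a=0$ contributes nothing, so
\[
V_{k+1}(p)=\frac{(-1)^{k+1}}{k+2}\sum_{j=0}^{k+1}\binom{k+2}{j}B_j\,p^{1-j}.
\]

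Finally I substitute this into (\ref{eq:Hkn_fn_Vkp1n}) with $\phi(p)=p-1$. Since $k$ is even, $(-1)^{k+1}=-1$, and this sign cancels the leading minus sign in (\ref{eq:Hkn_fn_Vkp1n}). This gives exactly (\ref{eq:Hkp_alpha}).

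No step is a serious obstacle. The only point needing care is the Bernoulli convention: with $B_1=+\tfrac12$ the formula would instead give $\sum_{a=1}^{p}a^{k+1}$, so the statement must be read with $B_1=-\tfrac12$. As a sanity check, for $k=2$ the formula gives $\intH{2}{p}=\frac{(p-1)(p-2)}{12p}$. The right-hand side evaluates to $\frac{p^2-3p+2}{12p}$, which is the same thing, and it is consistent with (\ref{eq:Delange_integral_q_egal_1}), which gives $\intH{2}{p}=\frac{2(p-1)}{12p}$ only after correctly accounting for the normalisation by $\phi(n)^{2}$. I would verify that normalisation numerically for $p=3$ before finalising the write-up.
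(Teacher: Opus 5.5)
Your proof is correct and follows the paper's own route. You reduce to $\alpha=1$ via Proposition~\ref{prop:Hkn_egal_Hkradn}, use $\nu_p(a)=-a/p$ on the totatives, substitute $V_{k+1}(p)$ into Theorem~\ref{thm:Hkn_fn_Vkp1n}, and finish with Faulhaber's formula taking $B_1=-\tfrac12$. Your power sum $\sum a^{k+1}$ has the right exponent; the paper's intermediate display writes $i^k$, which is a misprint.

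The one thing to fix is your closing sanity check, which is miscomputed and should not appear in the write-up as it stands. For $k=2$, with $B_0=1$, $B_1=-\tfrac12$, $B_2=\tfrac16$, $B_3=0$, one gets $\sum_{j=0}^{3}\binom{4}{j}B_j p^{1-j}=p-2+\frac1p=\frac{(p-1)^2}{p}$. The formula therefore gives $\intH{2}{p}=\frac{1}{6(p-1)}\cdot\frac{(p-1)^2}{p}=\frac{p-1}{6p}$, not $\frac{(p-1)(p-2)}{12p}$. As a check at $p=3$, integrating $\left(\FP{3t}-\FP{t}\right)^2$ directly over $[0,1]$ gives $\frac19$, as predicted.

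This value agrees with Delange's formula with no normalisation issue at all. Indeed $\intH{2}{n}=\phi(n)^2\cdot\frac{2^{\omega(n)}}{12n\phi(n)}=\frac{2^{\omega(n)}\phi(n)}{12n}$, which equals $\frac{2(p-1)}{12p}=\frac{p-1}{6p}$ at $n=p$. The discrepancy you noticed came entirely from the arithmetic slip, not from any normalisation.
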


\begin{proof} From Proposition \ref{prop:Hkn_egal_Hkradn}, there is $\intH{k}{p^\alpha}=\intH{k}{p}$. Given that $p$ is a prime number, we have $\nu_p(i)=-\frac{i}{p}$ from Equation (\ref{eq:nu_n}). In that case, Equation (\ref{eq:Hkn_fn_Vkp1n}) simplifies to
\begin{equation*}
\intH{k}{p} = - \frac{1+(-1)^k}{(k+1)(p-1)} \left(\frac{-1}{p} \right)^{k+1} \sum_{i=1}^{p-1} i^k.
\end{equation*}
We get Equation (\ref{eq:Hkp_alpha}) after using Faulhaber's formula for sums of powers of consecutive integers that can be found in Equation 6.78 page 283 in \cite{graham1989donald}.
\end{proof}

In particular, some direct calculations lead to
\begin{equation}\label{eq:H2p_p_prime}
\intH{2}{p} = \frac{p-1}{6p},
\end{equation}
and
\begin{equation}\label{eq:H4p_p_prime}
\intH{4}{p} = \frac{p-1}{30}\left(\frac{2}{p}-\frac{2}{p^2}-\frac{1}{p^3}\right).
\end{equation}

To conclude this part, the problem has been reduced to the calculation of $\intH{k}{n}$ for $k$ an even integer greater than two and $n$ an odd, square-free integer, with at least two factors.

\subsection{Franel integral and Dedekind sums}\label{sec:Franel_and_Dedekind}

We finish this section with a general formula for the Franel integral with $k$ parameters. Let $E$ be a multiset of $k$ integers, we consider the Franel integral in its expression using Bernoulli functions:
\[
\Franel{E}=\int_0^1 \prod_{a\in E} B(a t)dt.
\]
We define\footnote{For simplicity in the notations, we may sometimes write $W_N(a_1,\ldots a_k)$ instead of $W_N(\{ a_1,\ldots a_k)\}$, since the formula does not depend on the variables order.}
\begin{equation}\label{eq:def:W_N}
W_N(E) := \sum_{r=1}^{N-1}\prod_{a\in E}(ar\bmod N).
\end{equation}
This definition, is an integer valued sum consistent with \cite{brown2026family}. The sum is closely related to generalised Dedekind\footnote{The exact dependency depends on the definition that we take for the generalised Dedekind sum, see for example \cite{zagier1973higher}. We shall keep this question outside of the scope of this paper.} sums since $N^{\card{E}}\prod_{a\in E}(ar\bmod N)=\prod_{a\in E}\FP{\frac{ar}{N}}$.

If $a\mid N$, then
\[
r\bmod\frac Na=\frac{ar\bmod N}{a},
\]
and we can also write that

\begin{equation}\label{eq:W_N_ecriture2}
W_N(E) = \left(\prod_{a\in E}a\right)\sum_{r=1}^{N-1}\prod_{a\in E}\left(r\bmod \frac{N}{a}\right).
\end{equation}

While McIntosh used a Fourier approach (see \cite{mcintosh1996franel}), we try here a naive approach that consists in looking for a subdivision of [0,1] such that on each interval of this subdivision, and for any $a\in E$, each function $t\mapsto B(at)$ is a linear function of $t$. Each of these functions have singularities when $t=b/a$ for an integer $b$. Therefore, any integer $N$ that is a common multiple of any integer of $E$, is such that the functions $B$ are linear on all intervals $]\frac{r}{N},\frac{r+1}{N}[$. For simplicity, we can take $N:=\text{lcm}(E)$, although any multiple of $N$ would be acceptable. The Franel integral may now be written as
\[
\Franel{E} = \sum_{r=0}^{N-1}\int_{\frac{r}{N}}^{\frac{r+1}{N}} \prod_{a\in E} B(at)dt.
\]
In each integral within the sum, we apply the change of variable $t=\frac{r}{N}+u$ and we get
\begin{equation*}
\Franel{E} = \sum_{r=0}^{N-1} \int_{u=0}^{\frac{1}{N}} \prod_{a\in E} B\left(\frac{ar}{N} + au\right) du.
\end{equation*}
Considering the Euclidean division of $r$ by the integer $\frac{N}{a}$ we can write that
\[
\frac{ar}{N}+au = \pef{ar}{N} + \frac{a}{N}\left(r\bmod \frac{N}{a}\right) + au.
\]
Given that $\pef{ar}{N}$ is an integer and given that
\[
0\leq \frac{a}{N}\left(r\bmod \frac{N}{a}\right) + au \leq \frac{a}{N}\left(\frac{N}{a}-1\right) + a \frac{1}{N} = 1,
\]
we can just remove the fractional part and write that
\begin{equation*}
B\left( \frac{ar}{N}+au \right) = \frac{a}{N}\left(r\bmod \frac{N}{a}\right) + au - \frac{1}{2}.
\end{equation*}
Then,
\begin{equation}\label{eq:Franel_fn_W}
\Franel{E} = \sum_{\substack{A\bigcup B =E\\A\bigcap B = \emptyset}} \frac{1}{N^{\card{A}}}W_N(A) \int_{u=0}^{\frac{1}{N}}\prod_{b \in B}\left( bu - \frac{1}{2}\right) du. 
\end{equation}
The integrals have polynomial integrands and can be calculated as
\begin{equation}\label{eq:formule_generale_integrale_polynomiale}
\int_{u=0}^{\frac{1}{N}}\prod_{b \in B}\left( bu - \frac{1}{2}\right) du = \sum_{C\subset B} {\left(\frac{-1}{2}\right)}^{\card{B}-\card{C}} \frac{\prod_{c\in C}c}{(\left|C \right|+1)N^{\left|C \right|+1}}.
\end{equation}

The arithmetical nature of the Franel integral lies in the evaluation of the sums $W_N(A)$, a problem that we think is intrinsically the evaluation of generalized Dedekind sums.

\section{Analysis of \texorpdfstring{$\intH{4}{n}$}{H(4,n)}}

\subsection{Exploitation of Theorem \ref{thm:Hkn_fn_Vkp1n}}

Theorem \ref{thm:Hkn_fn_Vkp1n} means that the calculation of $\intH{4}{n}$ is equivalent to the calculation $V_5(n)$. Indeed as we shall see, even $V_4(n)$ would be enough to calculate $\intH{4}{n}$. Unfortunately we have not found a formula for $V_4(n)$ but formulas for $V_i(n)$ are known for $i$ ranging from 0 to 3 included.

\begin{proposition} For any integer $n\geq 2$ and for any positive integer $k$
\begin{equation}
\sum_{a\in \setTot{n}} \left( {\nu_n(a)}^k + (-1)^{k+1}\left(1+\nu_n(a)\right)^k \right) = 0.
\label{eq:relation_liante_Vk}
\end{equation}
\end{proposition}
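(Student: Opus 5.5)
The plan is to use the symmetry $a\mapsto n-a$ of $\setTot{n}$ together with Propositions \ref{prop:nuna_plus_nunma} and \ref{prop:nunknpa}, in the same way as in the computation of the sum $A$ in the proof of Theorem \ref{thm:Hkn_fn_Vkp1n}. The idea is to show that the map $a\mapsto n-a$ turns $\sum_{a\in\setTot{n}}\nu_n(a)^k$ into $\sum_{a\in\setTot{n}}(-1-\nu_n(a))^k$. The relation (\ref{eq:relation_liante_Vk}) is then just the rearrangement of this identity.

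First, for $a\in\setTot{n}$ we have $n-a\in\setTot{n}$, since $\mygcd{n-a}{n}=\mygcd{a}{n}=1$ and $0<n-a<n$. For $n\geq 2$ the map $a\mapsto n-a$ is therefore a bijection of $\setTot{n}$ onto itself; it is an involution.

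Next, Proposition \ref{prop:nunknpa} with $k=1$ applied to $-a$ gives $\nu_n(n-a)=\nu_n(-a)$. Strictly, that proposition is stated for $a\in\setTot{n}$, but its proof only uses that $n/d$ is an integer, so it applies verbatim to $-a$. Proposition \ref{prop:nuna_plus_nunma} then gives $\nu_n(n-a)=\nu_n(-a)=-1-\nu_n(a)$.

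Now reindex the sum by the bijection $a\mapsto n-a$:
\[
\sum_{a\in\setTot{n}}\nu_n(a)^k=\sum_{a\in\setTot{n}}\nu_n(n-a)^k=\sum_{a\in\setTot{n}}\left(-1-\nu_n(a)\right)^k=(-1)^k\sum_{a\in\setTot{n}}\left(1+\nu_n(a)\right)^k .
\]
Moving the right-hand side to the left and using $-(-1)^k=(-1)^{k+1}$ gives exactly (\ref{eq:relation_liante_Vk}).

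There is no real obstacle here. The only point needing care is checking that Proposition \ref{prop:nunknpa} may be applied to $-a$, which lies outside $\setTot{n}$ as a set of positive integers. This is handled by the remark above that its proof works for any integer argument. Alternatively, one can redo the short direct computation for $\FP{(n-a)/d}$ with $d\mid n$.
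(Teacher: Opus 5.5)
Your proof is correct and follows the paper's argument: reindex by the involution $a\mapsto n-a$ of $\setTot{n}$, then use $\nu_n(n-a)=\nu_n(-a)=-1-\nu_n(a)$ from Propositions \ref{prop:nunknpa} and \ref{prop:nuna_plus_nunma}. You do not even need to extend Proposition \ref{prop:nunknpa} to $-a$: applying it as stated, with $k=-1$ to the totative $n-a\in\setTot{n}$, already gives $\nu_n(-a)=\nu_n(n-a)$.
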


\begin{proof} Given that $a\mapsto n-a$ is a bijection of $\setTot{n}$, we have
\[
\sum_{a\in \setTot{n}} {\nu_n(a)}^k=\sum_{a\in \setTot{n}} {\nu_n(n-a)}^k,
\]
and then we use Propositions \ref{prop:nunknpa} and \ref{prop:nuna_plus_nunma}.
\end{proof}
The case $k=1$ provides
\begin{equation}\label{eq:rel_V1n_phin}
2V_1(n) + \phi(n) = 0.
\end{equation}
The case $k=3$ provides
\begin{equation}\label{eq:rel_V2n_V3n}
4V_3(n) + 6V_2(n) - \phi(n) = 0.
\end{equation}
The case $k=5$ gives
\begin{equation}\label{eq:rel_V5n_V4n_etc}
2V_5(n) + 5 V_4(n) + 10 V_3(n) + 10 V_2(n) + 5V_1(n) + \phi(n) = 0.
\end{equation}

\begin{remark} An even $k=2a$ for some integer $a$ happens to provide the same equation as $k=2a-1$ which is easily seen from Equation (\ref{eq:relation_liante_Vk}).
\end{remark}
It happens that we can calculate $V_k(n)$ for $k\leq3$, first of all 
\begin{equation}\label{eq:formula_V0n}
V_0(n)=\sum_{a\in \setTot{n}}{\nu_n(a)}^0=\sum_{a\in \setTot{n}}1=\phi(n).
\end{equation}
From Equation (\ref{eq:rel_V1n_phin}) we have
\begin{equation}\label{eq:formula_V1n}
V_1(n)=-\frac{\phi(n)}{2}.
\end{equation}
From Equation (\ref{eq:Hkn_fn_Vkp1n}), with $k=2$, we have
\begin{equation}
\intH{2}{n} = - \frac{2}{3\phi(n)} V_3(n).
\label{eq:expr1_J2}
\end{equation}
From Theorem 2 in \cite{delange1968distribution} we have
\begin{equation}
\intH{2}{n} = \frac{2^{\omega(n)}\phi(n)}{12n}.
\label{eq:expr2_J2}
\end{equation}
From Equations (\ref{eq:expr1_J2}) and (\ref{eq:expr2_J2}), it comes
\begin{equation}\label{eq:formula_V3n}
V_3(n) = - \frac{2^{\omega(n)}\phi(n)^2}{8n}.
\end{equation}
Using Equation (\ref{eq:rel_V2n_V3n}) we find that
\begin{equation}\label{eq:formula_V2n}
V_2(n) = \frac{2^{\omega(n)}\phi(n)^2}{12n} + \frac{\phi(n)}{6}.
\end{equation}

\begin{remark} From Theorem \ref{thm:Hkn_fn_Vkp1n}, we see that $\intH{4}{n}$ depends on $V_5(n)$. From Equation (\ref{eq:rel_V5n_V4n_etc}), we see that $V_5(n)$ depends on $V_i(n)$ for $0\leq i\leq 4$. The Equations (\ref{eq:formula_V0n}), (\ref{eq:formula_V1n}), (\ref{eq:formula_V2n}) and (\ref{eq:formula_V3n}) lead to
\begin{equation}\label{eq:relation_V5n_V4n}
V_5(n) = -\frac{5}{2}V_4(n)+\frac{\phi(n)}{12}\left( 5\cdot2^{\omega(n)-1}\frac{\phi(n)}{n}-1\right).
\end{equation}
This means that having a formula for either $V_4(n)$ or $V_5(n)$ would allow for the calculation $\intH{4}{n}$. 
\end{remark}

\subsection{A short proof of Spence formula}

Although the result below is not necessary in the scope of the paper, we believed that it was worth mentionning that the Equation (\ref{eq:formula_V2n}), which we obtained rather easily, allows for a short proof of Spence formula (see \cite{spence1963formulae},\cite{van1971generalisation},\cite{brown2026SpenceFormula}) which states that
\begin{equation*}
\sum_{j=1}^{\phi(n)}ja_j = \frac{\phi(n)}{24}\left(8n\phi(n)+6n+2\phi(m)(-1)^{\omega(m)}-2^{\omega(m)}\right),
\end{equation*}
where $m=\rad{n}$.

\begin{proof} The sum for $a\in \setTot{n}$ of the square of Equation (\ref{eq:formula_thetanx_plus_nunx}) evaluated at $x=a$ provides
\begin{equation*}
\sum_{a\in \setTot{n}}\theta_n(a)^2+\sum_{a\in \setTot{n}}\nu_n(a)^2+2\sum_{a\in \setTot{n}}\theta_n(a)\nu_n(a) = \frac{\phi(n)^2}{n^2} \sum_{a\in \setTot{n}} a^2.
\end{equation*}
All the sums but one are known\footnote{The first sum is just $\sum_{j=1}^{\phi(n)}j^2$; the second is Equation (\ref{eq:formula_V2n}); the last one, $\sum_{a\in \setTot{n}} a^2$ can be found in \cite{apostol1976introduction}.} and therefore we get a formula for the unknown sum
\begin{equation}\label{eq:formula_sum_thetananuna}
\sum_{a \in \setTot{n}}\theta_n(a)\nu_n(a)=\frac{{(-1)}^{\omega(n)}}{12n}\phi(n)^3-\left(\frac{1}{4}+\frac{2^{\omega(n)}}{24n}\right)\phi(n)^2-\frac{1}{6}\phi(n).
\end{equation}
Now, observe that
\begin{equation*}
\theta_n(a)a = \theta_n(a) \frac{n}{\phi(n)}\left(\theta_n(a)+\nu_n(a)\right),
\end{equation*}
and therefore, summing over $a\in \setTot{n}$
\[
\sum_{a \in \setTot{n}}\theta_n(a)a=\frac{n}{\phi(n)}\left(\sum_{a \in \setTot{n}}{\theta_n(a)}^2+\sum_{a \in \setTot{n}}\theta_n(a)\nu_n(a)\right),
\]
which allows for a calculation of Spence formula since the two sums on the right-hand side are known.
\end{proof}

\subsection{Evaluation of the Franel integrals of order 2 and 4}

Let $E$ be a multiset of 4 integers, with $N$ being their least common multiple. The evaluation of $\Franel{E}$, using Equation (\ref{eq:Franel_fn_W}) involves the evaluation of the sums $W_N(A)$ for $A\subset E$. The expression of $W_N$ as in Equation (\ref{eq:Franel_fn_W}) means that we only need to find formulas for sums of the form
\begin{equation}\label{eq:def:X_N_A}
X_N(A):=\sum_{r=0}^{N-1} \prod_{a \in A} (r\bmod a),
\end{equation}
where $A$ is a multiset of divisors of $N$ since the two sums are linked via the following equation
\begin{equation}\label{eq:W_N_function_of_X_N}
W_N(A)=\left(\prod_{a\in A}a\right)X_N\left(\left\{\frac{N}{a}\;\mid\;a\in A\right\}\right)
\end{equation}

\subsubsection{Evaluation of the sums \texorpdfstring{$X_N(A)$}{XN(A)}, \texorpdfstring{$\card{A}\leq 3$}{card(A)<=3}}

As we shall see below, we found closed form formulas for $X_N(A)$ for $\card{A}\leq 3$. We have not found a simple closed form formula for $\card{A}>3$. It is clear that 
\begin{equation}\label{eq:X_N_void}
X_N(\emptyset)=N.
\end{equation} 
and that
\begin{equation}\label{eq:X_N_a}
X_N(\{a\})=\frac{N}{2}(a-1).
\end{equation}
which are calculated as two elementary sums. The cases $\card{A}=2$ and $\card{A}=3$ are treated below:

\begin{proposition}\label{prop:formula_X_N_a_b} Let $a$, $b$ and $N$ be positive integers such that $a\mid N$ and $b\mid N$, we have the following formula
\begin{equation}\label{eq:X_N_a_b}
X_N(\{a,b\})=\frac{N}{4}(a-1)(b-1)+\frac{N}{12}\left(\mygcd{a}{b}^2-1\right).
\end{equation}
\end{proposition}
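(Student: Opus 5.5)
Since $a\mid N$ and $b\mid N$, the function $r\mapsto (r\bmod a)(r\bmod b)$ is periodic with period $L=\mathrm{lcm}(a,b)$, and $L\mid N$. The plan is therefore to reduce to $N=L$:
\[
X_N(\{a,b\})=\frac{N}{L}\,X_L(\{a,b\}).
\]
It then suffices to show
\[
X_L(\{a,b\})=\frac{L}{4}(a-1)(b-1)+\frac{L}{12}\left(g^2-1\right),\qquad g=\mygcd{a}{b}.
\]
Multiplying by $N/L$ gives Equation (\ref{eq:X_N_a_b}).

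To evaluate $X_L$, I will parametrise $r\in\{0,\ldots,L-1\}$ through the Chinese remainder structure. Write $a=ga'$ and $b=gb'$. The map $r\mapsto (r\bmod a, r\bmod b)$ is a bijection from $\mathbb{Z}/L\mathbb{Z}$ onto the set of pairs $(x,y)$ with $0\le x<a$, $0\le y<b$ and $x\equiv y \pmod g$. Hence
\[
X_L(\{a,b\})=\sum_{\substack{0\le x<a,\;0\le y<b\\ x\equiv y\ (\mathrm{mod}\ g)}} xy .
\]
Split by the common residue $s\in\{0,\ldots,g-1\}$, writing $x=s+gi$ with $0\le i<a'$ and $y=s+gj$ with $0\le j<b'$. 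The sum factorises as
\[
\sum_{s=0}^{g-1}\Bigl(\sum_{i=0}^{a'-1}(s+gi)\Bigr)\Bigl(\sum_{j=0}^{b'-1}(s+gj)\Bigr)
=\sum_{s=0}^{g-1}\Bigl(a's+g\tfrac{a'(a'-1)}{2}\Bigr)\Bigl(b's+g\tfrac{b'(b'-1)}{2}\Bigr).
\]

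Expanding and using $\sum_{s<g} s=g(g-1)/2$ and $\sum_{s<g} s^2=(g-1)g(2g-1)/6$ turns this into a polynomial identity in $g,a',b'$. I will then check it against the target, using $L=ga'b'$, $a-1=ga'-1$ and $b-1=gb'-1$. The three groups of terms are:
\begin{itemize}
\item the $s^2$ term: $a'b'(g-1)g(2g-1)/6$;
\item the cross terms: $a'b'\frac{g(g-1)}{2}\cdot\frac{g(a'-1)+g(b'-1)}{2}$;
\item the constant term: $g^3a'b'(a'-1)(b'-1)/4$.
\end{itemize}
The target is $\frac{ga'b'}{4}(ga'-1)(gb'-1)+\frac{ga'b'}{12}(g^2-1)$. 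After dividing everything by $ga'b'/12$, the check is a short polynomial comparison.

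This final algebraic verification is the only step where errors could creep in. As a sanity check I would first test $g=1$, where the sum is simply $\frac{a(a-1)}{2}\cdot\frac{b(b-1)}{2}=\frac{L}{4}(a-1)(b-1)$. I would also test $a=b$, where $X_a(\{a,a\})=\sum_{x<a}x^2=\frac{a(a-1)(2a-1)}{6}$, and confirm that this matches $\frac{a}{4}(a-1)^2+\frac{a}{12}(a^2-1)$. Both checks agree with the formula, which supports the decomposition before doing the general expansion.
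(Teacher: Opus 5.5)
Your proposal is correct and is essentially the paper's argument. The paper writes $r=gq+s$ with $0\le s<g$, uses $(gq+s)\bmod gu=g(q\bmod u)+s$ and $uv\mid M$ so that $(q\bmod u,q\bmod v)$ runs uniformly over all pairs; this is exactly your parametrisation $x=s+gi$, $y=s+gj$ after reducing to $N=\mathrm{lcm}(a,b)$. Your final polynomial check, which the paper leaves as ``the result follows'', also goes through: after dividing by $ga'b'/12$, both sides reduce to $3g^2a'b'-3g(a'+b')+g^2+2$.
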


\begin{proof} Let $g=\mygcd{a}{b}$. We can write that $a=gu$ and $b=gv$ for some integers $u$ and $v$. Since $a\mid N$ and $g\mid a$, $g\mid N$ and there exists an integer $M$ such that $N=gM$. Now,
\begin{align*}
X_N(\{a,b\}) &= \sum_{q=0}^{M-1}\sum_{s=0}^{g-1} ((gq+s)\bmod gu)((gq+s)\bmod gv) \\
&= \sum_{q=0}^{M-1}\sum_{s=0}^{g-1} (g(q\bmod u)+s)(g(q\bmod v)+s).
\end{align*}
Note that, $a\mid N$ means that there exists an integer $\theta$ such that $N=a\theta=gu\theta$, but $N=gM$ thus $u\mid M$. Similarly $v\mid M$, but $\mygcd{u}{v}=1$ means that $uv\mid M$. Since $M$ is a multiple of $uv$ the sums over $q$ can easily be evaluated as repeated elementary sums and the result follows.
\end{proof}

\begin{remark} We see that the formula vanishes for $a=1$ or $b=1$, as expected. We also see that $a=b=N$ leads to the formula of a sum of squares, as expected.
\end{remark}

\begin{proposition}\label{prop:formula_X_N_a_b_c} Let $a$, $b$, $c$ and $N$ be positive integers such that $a\mid N$, $b\mid N$ and $c\mid N$, we have the following formula
\begin{multline}\label{eq:W_N_a_b_c}
X_N(\{a,b,c\})=\frac{N}{8}(a-1)(b-1)(c-1)+\frac{N}{24}(a-1)\left(\mygcd{b}{c}^2-1\right)+\\
\frac{N}{24}(b-1)\left(\mygcd{a}{c}^2-1\right)+\frac{N}{24}(c-1)\left(\mygcd{a}{b}^2-1\right).
\end{multline}

\end{proposition}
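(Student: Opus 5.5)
A convenient route is to center the residues. For a divisor $a\mid N$ write
\[
r\bmod a=\frac{a-1}{2}+\delta_a(r),\qquad \delta_a(r):=(r\bmod a)-\frac{a-1}{2},
\]
so that $\delta_a$ is $a$-periodic with $\sum_{s=0}^{a-1}\delta_a(s)=0$. Expanding the product $(r\bmod a)(r\bmod b)(r\bmod c)$ and summing over $r$ from $0$ to $N-1$ produces four types of terms.

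The term with no $\delta$ gives $\frac{N}{8}(a-1)(b-1)(c-1)$. Each term with exactly one $\delta$ vanishes, since $a\mid N$ and $\delta_a$ has zero mean over a period. For the terms with exactly two $\delta$'s, Proposition \ref{prop:formula_X_N_a_b} combined with Equations (\ref{eq:X_N_void}) and (\ref{eq:X_N_a}) gives
\[
\sum_{r=0}^{N-1}\delta_b(r)\delta_c(r)=X_N(\{b,c\})-\frac{N}{4}(b-1)(c-1)=\frac{N}{12}\left(\mygcd{b}{c}^2-1\right).
\]
Multiplying by $\frac{a-1}{2}$ yields the term $\frac{N}{24}(a-1)(\mygcd{b}{c}^2-1)$, and the other two such terms follow symmetrically. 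These account for all four terms of the claimed formula.

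The main obstacle is therefore to show that the fully centered sum
\[
S:=\sum_{r=0}^{N-1}\delta_a(r)\delta_b(r)\delta_c(r)
\]
vanishes. I would prove this with the involution $r\mapsto N-1-r$ on $\{0,\dots,N-1\}$. Since $a\mid N$, we have $(N-1-r)\bmod a=a-1-(r\bmod a)$, hence $\delta_a(N-1-r)=-\delta_a(r)$, and likewise for $b$ and $c$. The product of three centered factors is therefore odd under the involution, giving $S=-S$ and so $S=0$.

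The same involution also re-proves the vanishing of the one-$\delta$ terms, and it shows that the odd part is killed in general, which is consistent with the structure of the formula. Assembling the pieces gives Equation (\ref{eq:W_N_a_b_c}).
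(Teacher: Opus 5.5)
Your proof is correct and is essentially the paper's argument. The paper also uses the reversal $r\mapsto N-1-r$ with $(N-1-r)\bmod a=a-1-(r\bmod a)$, expands the product so that $X_N(\{a,b,c\})$ reappears with a minus sign, and then solves using the one- and two-modulus formulas. Your centering by $\frac{a-1}{2}$ simply packages that same cancellation as the vanishing of the odd part $S$, which makes the bookkeeping cleaner.
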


\begin{proof} We write the sum in the reverse order, observing that $(n-r)\bmod a=a-1-r\bmod a$. The result follows easily after development of the product under the sum. The same sum appears with a negative sign on the right-hand side. We then use the previous formulas for one and two modulos.
\end{proof}

\begin{remark} We see that the formula vanishes for $a=1$ or $b=1$, as expected. We also see that $a=b=N$ leads to the formula of a sum of cubes, as expected.
\end{remark}

We formulate the following conjecture\footnote{All our attemps to find plausible candidate functions via the resolution of systems of equations have failed. We believe this is consistent with the fact that no simple formula for the Franel integrals of order 4 is known in general.}:

\begin{conjecture} If $N$ is an integer and if $a$, $b$, $c$ and $d$ are divisors of $N$, then $X_N(a,b,c,d)$ does not have a polynomial closed form formula in $a$, $b$, $c$, $d$ and in any greatest common divisors of any combination of these variables, pairs, triplets or quadruplets.
\end{conjecture}

\subsubsection{An alternative proof of the formula of Franel integrals of order 2}

As concluded in Section \ref{sec:Franel_and_Dedekind}, a direct a calculation of the integral is possible if we have formulas for the expressions $W_N(A)$ since we already have Equation (\ref{eq:formule_generale_integrale_polynomiale}). For the evaluation of $\Franel{a,b}$, we only need to have formulas for $W_N(A)$ when $\card{A}\leq2$. Thanks to Equation (\ref{eq:W_N_function_of_X_N}), we only need to have formulas for $X_N(A)$ when $\card{A}\leq 2$. Given that these formulas were provided in the previous section, we see that a direct calculation is possible. With $E=\left\{a,b\right\}$, the Equations (\ref{eq:Franel_fn_W}) and (\ref{eq:formule_generale_integrale_polynomiale}) give immediately
\begin{multline*}
\Franel{a,b} = \frac{1}{N^{\card{\emptyset}}}W_N(\emptyset) \left(\frac{ab}{3N^3}-\frac{a+b}{4N^2}+\frac{1}{4N}\right)+\frac{1}{N^{\card{\{a\}}}}W_N(\{a\}) \left(\frac{b}{2N^2}-\frac{1}{2N}\right)\\
+ \frac{1}{N^{\card{\{b\}}}}W_N(\{b\}) \left(\frac{a}{2N^2}-\frac{1}{2N}\right) + \frac{1}{N^{\card{\{a,b\}}}}W_N(\{a,b\})\frac{1}{N}.
\end{multline*}
After replacing the $W_N$ by means of Equations (\ref{eq:W_N_function_of_X_N}), (\ref{eq:X_N_void}), (\ref{eq:X_N_a}) and (\ref{eq:X_N_a_b}), we get to
\begin{multline*}
\Franel{a,b} = \frac{ab}{3N^2}-\frac{a+b}{4N}+\frac{1}{4}+\\
\frac{1}{N}a\frac{N}{2}\left(\frac{N}{a}-1\right) \left(\frac{b}{2N^2}-\frac{1}{2N}\right)
+ \frac{1}{N} b\frac{N}{2}\left(\frac{N}{b}-1\right) \left(\frac{a}{2N^2}-\frac{1}{2N}\right) + \\
 \frac{1}{N^2} ab\left(\frac{N}{4}\left(\frac{N}{a}-1\right)\left(\frac{N}{b}-1\right)+\frac{N}{12}\left(\mygcd{\frac{N}{a}}{\frac{N}{b}}^2-1\right)\right) \frac{1}{N}.
\end{multline*}
After simplification, this expression simplifies to $\Franel{a,b}=\frac{{\mygcd{a}{b}}^2}{12ab}$, as expected (see \cite{franel1924suites}).

\subsubsection{Franel integrals of order 4}

We apply the same method as in the previous section at the order 4, with the multiset $E=\left\{a,b,c,d\right\}$ and $N$ any multiple of $\text{lcm}(E)$. The main difference with the order 2 is that we do not have a closed form fomula for $W_N(E)$, since $\card{E}=4$. Therefore, we suggest to keep this term as it is, and to replace all other terms $W_N(A)$ by their closed form formulas, when $A\subset E$ and $\card{A}\leq 3$. We have that

\begin{proposition}\label{prop:Franel4_as_function_of_W4}
Let $E=\left\{a,b,c,d\right\}$ be a multiset of 4 positive integers, and $N$ be any common multiple of $a,b,c,d$. For any unordered pair $\{x,y\}\subset E$, we write $\{z,t\}$ the unordered pair $E\setminus \{x,y\}$.
\begin{multline}\label{eq:Franel_ordre_4}
\Franel{a,b,c,d}=\frac{W_N(E)}{N^5}+\frac{abcd}{30N^4} -\frac{1}{72N^2}\sum_{\{x,y\}\subset E} \frac{{\mygcd{x}{y}}^2}{xy}zt + \\
\frac{1}{48N}\sum_{\{x,y\}\subset E} \frac{{\mygcd{x}{y}}^2}{xy}(z+t-M) -\frac{1}{24N^2}\sum_{\{x,y\}\subset E}xy +\frac{1}{16N}\sum_{x\in E}x-\frac{1}{16}.
\end{multline}
\end{proposition}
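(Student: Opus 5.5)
The plan is to specialise Equation (\ref{eq:Franel_fn_W}) to $E=\{a,b,c,d\}$. Every $W_N(A)$ with $\card{A}\leq 3$ is replaced by a closed form, while $W_N(E)$ is kept as it stands. The $W_N(E)$ term comes from $A=E$, $B=\emptyset$ and contributes $\frac{1}{N^4}W_N(E)\cdot\frac1N=\frac{W_N(E)}{N^5}$, which gives the first term of (\ref{eq:Franel_ordre_4}). The other $15$ partitions $A\sqcup B=E$ are grouped by $\card{A}\in\{0,1,2,3\}$. For each group I compute the polynomial integral $\int_0^{1/N}\prod_{b\in B}(bu-\frac12)\,du$ from Equation (\ref{eq:formule_generale_integrale_polynomiale}). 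I then rewrite $W_N(A)$ through Equation (\ref{eq:W_N_function_of_X_N}), using $X_N(\emptyset)=N$, (\ref{eq:X_N_a}), Proposition \ref{prop:formula_X_N_a_b} and Proposition \ref{prop:formula_X_N_a_b_c}, each applied to the divisors $N/x$ of $N$.

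First I record the conversions in normalised form:
\begin{align*}
\frac{W_N(\{x\})}{N}&=\frac{N-x}{2},\\
\frac{W_N(\{x,y\})}{N^2}&=\frac{(N-x)(N-y)}{4N}+\frac{xy}{12N}\left(\mygcd{\tfrac Nx}{\tfrac Ny}^2-1\right).
\end{align*}
I then use $\mygcd{N/x}{N/y}=N\,\mygcd{x}{y}/(xy)$, which holds because $\mygcd{N/x}{N/y}=N/\mathrm{lcm}(x,y)$. This turns the gcd term into $\frac{N}{12}\frac{\mygcd{x}{y}^2}{xy}-\frac{xy}{12N}$, and it is the origin of the $\frac{\mygcd{x}{y}^2}{xy}$ factors in the statement. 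The same substitution in Proposition \ref{prop:formula_X_N_a_b_c} gives $\frac{W_N(\{x,y,z\})}{N^3}$ as a product term $\frac{(N-x)(N-y)(N-z)}{8N^2}$ plus three terms of the form $\frac{(N-z)}{24}\left(\frac{\mygcd{x}{y}^2}{xy}-\frac{xy}{N^2}\right)$.

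Next I multiply each group by its polynomial integral. For $\card{B}=1$ the integral is $\frac{t}{2N^2}-\frac{1}{2N}$. For $\card{B}=2$ it is $\frac{zt}{3N^3}-\frac{z+t}{4N^2}+\frac1{4N}$. For $\card{B}=3,4$ the integrals are the analogous expansions with top terms $\frac{\prod b}{4N^4}$ and $\frac{abcd}{5N^5}$. I would collect the resulting terms by monomial type: $abcd$, the symmetric $\sum xy$, $\sum x$, the constant, and the gcd-bearing terms. The gcd terms come only from the $\card{A}=2$ and $\card{A}=3$ partitions. For a fixed pair $\{x,y\}$, the coefficient of $\frac{\mygcd{x}{y}^2}{xy}$ collects contributions from $A=\{x,y\}$ (weight $\frac{N}{12}\cdot\frac{1}{N^2}$ times the $B=\{z,t\}$ integral), from $A=\{x,y,z\}$ and from $A=\{x,y,t\}$. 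These should combine to $-\frac{zt}{72N^2}+\frac{z+t}{48N}-\frac{1}{48}$. This suggests that the $M$ in the statement is the constant absorbing that last piece, i.e. that the intended term is $(z+t)/N-1$ after normalisation. I would check this against the order-$2$ computation, and against a numeric case such as $a=b=c=d=1$, to pin down the constants.

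The main obstacle is the bookkeeping in the non-gcd part. The coefficients of $N^0,N^{-1},\dots$ multiplying the symmetric functions must cancel down to exactly $\frac{abcd}{30N^4}-\frac{1}{24N^2}\sum xy+\frac{1}{16N}\sum x-\frac1{16}$. In particular, all odd-type cross terms such as $\sum xyz$ must vanish. I would organise this by treating $\Franel{E}$ minus the gcd terms as a polynomial identity in the independent variables $a,b,c,d,N$ and comparing coefficients degree by degree. I would first verify it in the degenerate case $a=b=c=d=N$, where $\Franel{E}=\int_0^1B(t)^4dt=\frac1{80}$, and then in the case where all gcd's equal $1$, as checks on the constants.
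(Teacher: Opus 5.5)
Your plan is correct and is the derivation the paper intends. The paper gives no written proof of this proposition; it only says the order-2 method is applied at order 4. That method is what you do: specialise (\ref{eq:Franel_fn_W}), keep $W_N(E)$, and insert (\ref{eq:W_N_function_of_X_N}) with the closed forms for $\card{A}\leq 3$.

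Your gcd coefficient $-\frac{zt}{72N^2}+\frac{z+t}{48N}-\frac{1}{48}$ is right, so the undefined $M$ must be read as $N$. The non-gcd bookkeeping you defer does close: the $\sum xyz$ terms cancel, and the coefficients of $\frac{abcd}{N^4}$, $\frac{1}{N^2}\sum xy$, $\frac{1}{N}\sum x$ and $1$ come out as $\frac{1}{30}$, $-\frac{1}{24}$, $\frac{1}{16}$ and $-\frac{1}{16}$.

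One small slip: $W_N(\{x,y\})/N^2$ is already normalised, so the $A=\{x,y\}$ weight on $\frac{\mygcd{x}{y}^2}{xy}$ is $\frac{N}{12}$ times the $B=\{z,t\}$ integral, not $\frac{N}{12}\cdot\frac{1}{N^2}$ times it.
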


\subsubsection{Using Proposition \ref{prop:Franel4_as_function_of_W4} to express \texorpdfstring{$\intH{4}{N}$}{H4(N)}}

From Proposition \ref{prop:Hkn_is_Mobius_sum_of_Franel_integrals}, $\intH{4}{n}$ is a fourfold Möbius sum of Franel integrals of order 4 and the Equation (\ref{eq:Franel_ordre_4}) allows for some partial calculation of $\intH{4}{n}$. In addition, we use Proposition \ref{prop:Hkn_egal_Hkradn}, in order that we only need to calculate $\intH{4}{m}$ where $m=\rad{n}$ is square-free. In the expression on the right-hand side of Equation (\ref{eq:Franel_ordre_4}), all terms but one are either zero, or have a simple closed form formula. We denote by $\mathcal{W}_4(m)$ the contribution for which we do not know a closed form formula.

\begin{definition}\label{def:Somme_Mobius_W_4_m}
Let $m\ge2$ be a square-free integer; we define
\begin{equation}\label{eq:Somme_Mobius_W_4_m}
\mathcal{W}_4(m) := \frac{1}{m^5} \sum_{d_1,d_2,d_3,d_4\mid m} \mu(d_1) \mu(d_2) \mu(d_3) \mu(d_4) W_m\left( \frac m{d_1},\frac m{d_2},\frac m{d_3},\frac m{d_4} \right).
\end{equation}
\end{definition}

\begin{theorem}\label{thm:expression_H_4_m}
Let $m\ge2$ be a square-free integer. Then
\begin{equation}\label{eq:expression_H_4_m}
\intH{4}{m} = \mathcal{W}_4(m) + \frac{1}{60m}\left(\frac{\phi(m)}{m}\right)^3\left(2\phi(m)-5m2^{\omega(m)}\right).
\end{equation}
\end{theorem}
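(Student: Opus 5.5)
The plan is to specialise Proposition \ref{prop:Hkn_is_Mobius_sum_of_Franel_integrals} to $k=4$ and $n=m$, and then substitute the expression of Proposition \ref{prop:Franel4_as_function_of_W4} for each Franel integral. Since $(-1)^4=1$, we have
\[
\intH{4}{m}=\sum_{d_1,d_2,d_3,d_4\mid m}\mu(d_1)\mu(d_2)\mu(d_3)\mu(d_4)\,\Franel{\frac m{d_1},\frac m{d_2},\frac m{d_3},\frac m{d_4}}.
\]
For every quadruple of divisors, $N:=m$ is a common multiple of the four parameters $m/d_i$. So we may use Equation (\ref{eq:Franel_ordre_4}) with the \emph{same} $N=m$ throughout. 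The first term $W_m(E)/m^5$ then gives exactly $\mathcal{W}_4(m)$ after the fourfold Möbius summation. It remains to sum the other terms of Equation (\ref{eq:Franel_ordre_4}) one at a time.

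The key tool is a vanishing principle. Since $m\ge2$, we have $\sum_{d\mid m}\mu(d)=0$. Hence any term that does not depend on one of the four variables $d_i$ is killed by the Möbius sum over that $d_i$. This disposes of the following terms:
\begin{itemize}
\item the constant $-\frac1{16}$;
\item the terms $\frac1{16N}\sum_{x}x$ and $-\frac1{24N^2}\sum_{\{x,y\}}xy$, which each depend on at most two variables;
\item the whole term $\frac1{48N}\sum\frac{\gcd(x,y)^2}{xy}(z+t-M)$, where $M$ is the constant $N$ and each summand involves at most three of the four variables.
\end{itemize}
Only two terms survive: the one in $abcd$ and the one in $\frac{\gcd(x,y)^2}{xy}zt$.

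\emph{The $abcd$ term.} It factorises into four independent sums, each equal to $\sum_{d\mid m}\mu(d)\frac md=\phi(m)$. This gives
\[
\frac{\phi(m)^4}{30m^4}=\frac{1}{60m}\left(\frac{\phi(m)}{m}\right)^3 2\phi(m).
\]

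\emph{The $\gcd$ term.} For each of the $6$ pairs, the factor $zt$ again contributes $\phi(m)^2$. We are left with
\[
S(m):=\sum_{d_1,d_2\mid m}\mu(d_1)\mu(d_2)\frac{\gcd(m/d_1,m/d_2)^2}{(m/d_1)(m/d_2)}.
\]
This is the step needing a genuine computation, though it is routine. Since $m$ is square-free, $S$ is multiplicative over the primes $p\mid m$. At each prime the four choices $(d_1,d_2)\in\{1,p\}^2$ contribute $1-\frac1p-\frac1p+1=2\frac{\phi(p)}{p}$. Hence
\[
S(m)=\frac{2^{\omega(m)}\phi(m)}{m}.
\]
The term therefore totals
\[
-\frac{6}{72m^2}\phi(m)^2\,\frac{2^{\omega(m)}\phi(m)}{m}=-\frac{2^{\omega(m)}\phi(m)^3}{12m^3}=\frac1{60m}\left(\frac{\phi(m)}m\right)^3\bigl(-5m2^{\omega(m)}\bigr).
\]

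Adding the two surviving contributions to $\mathcal{W}_4(m)$ yields Equation (\ref{eq:expression_H_4_m}). The main obstacle is bookkeeping rather than depth. First, one must make sure the $(z+t-M)$ term really has $M=N$ constant, so that it vanishes. Second, one must verify that using the uniform $N=m$ in Proposition \ref{prop:Franel4_as_function_of_W4} is legitimate for every quadruple, including those where $\mathrm{lcm}(E)<m$. This is allowed because that proposition holds for any common multiple $N$.
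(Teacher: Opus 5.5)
Your proposal is correct and follows the paper's proof essentially step by step: you use the fourfold Möbius expansion with the uniform common multiple $N=m$, discard every term of Proposition \ref{prop:Franel4_as_function_of_W4} that does not involve all four $d_i$, and evaluate the two surviving terms (the $abcd$ term and the $\gcd(x,y)^2zt/(xy)$ term) by the same multiplicativity argument; your local factor $2\phi(p)/p$ is exactly the computation the paper dismisses as ``no difficulty''. You are also right to read the undefined $M$ in that proposition as the constant $N$; expanding the order-4 formula confirms this, and it is what makes the $(z+t-M)$ term vanish.
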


\begin{proof} If we consider Equation (\ref{eq:expr1_Hkn}), with $k=4$, we see that the same value $m$ can be used\footnote{Consistently with the comment made in section \ref{sec:Franel_and_Dedekind}.}, as a multiple of the least common multiple of the integers $\frac{n}{d_i}$ for $1\leq i\leq 4$. Now, looking at the right-hand side of Equation (\ref{eq:Franel_ordre_4}), it is clear that any term that does not depend simultaneously on $d_1$, $d_2$, $d_3$ and $d_4$ will have a zero fourfold Möbius sum, since we can factor at least one sum of the type $\sum_{d\mid m}\mu(d)$ which is equal to zero (See Theorem 263 in \cite{hardy1979introduction}). We immediately have
\[
\sum_{d_1,d_2,d_3,d_4\mid m} \mu(d_1) \mu(d_2) \mu(d_3) \mu(d_4) \frac{1}{30d_1d_2d_3d_4} = \frac{1}{30}\left(\frac{\phi(m)}{m}\right)^4.
\]
Also, the summand of the sum
\[
\sum_{d_1,d_2,d_3,d_4\mid m} \mu(d_1) \mu(d_2) \mu(d_3) \mu(d_4) {\mygcd{\frac{m}{d_1}}{\frac{m}{d_2}}}^2\frac{d_1d_2}{d_3d_4},
\]
is a multiplicative function, and there is no difficulty in proving that is equal to
\[
2^{\omega(m)}\frac{\phi(m)^3}{m}.
\]
There are 6 terms of that form, since there are 6 choices of unordered pairs in a set of 4 elements; their total contribution is 
\[
-\frac{1}{72m^2}\sum_{\{x,y\}\subset E} \frac{{\mygcd{x}{y}}^2}{xy}zt = -\frac{2^{\omega(m)}}{12}\left(\frac{\phi(m)}{m}\right)^3.
\]
which proves Equation (\ref{eq:expression_H_4_m}).
\end{proof}

\begin{remark} Let $p$ be a prime number, the Equation (\ref{eq:expression_H_4_m}) with $m=p$ gives
\begin{equation*}
\sum_{r=0}^{p-1}r^4 - \frac{1}{6}\left(\frac{p-1}{p}\right)^3 + \frac{1}{30}\left(\frac{p-1}{p}\right)^4,
\end{equation*}
which simplifies to 
\[
\frac{p-1}{30}\left(\frac{2}{p}-\frac{2}{p^2}-\frac{1}{p^3}\right),
\]
which is consistent with Equation (\ref{eq:H4p_p_prime}).
\end{remark}

\section{Conclusion}

The problem has not been solved in general, however it has been reduced to the calculation of $\intH{k}{m}$ for $k$ an even integer, greater than two, and $m$ an odd, square-free integer, with at least two factors. When $k=4$, the problem is equivalent to the calculation of $\mathcal{W}_4(m)$ or $V_4(m)$. In order to investigate this problem further, we suggest the following:
\begin{itemize}
	\item to use Parseval's theorem on $t\mapsto \nu_n(nt)^2$ which is 1-periodic, in an approach similar to that of Lucien Van Hamme (see \cite{van1971generalisation});
	\item a direct calculation of $V_4(m)$, if at all possible, using Spence's idea at the order 4 (see \cite{spence1963formulae});
	\item an exhaustive analysis of the properties of the sums $W_N$ and their possible reciprocity laws inherited from those of generalized Dedekind sums.
\end{itemize}

\section*{Acknowledgements} 

I would like to thank my wife Natallia for her support.

\bibliographystyle{plain}
\bibliography{biblio}

\end{document}